\setlist[enumerate]{nosep}
\definecolor{labelkey}{rgb}{0,0.08,0.45}
\definecolor{refkey}{rgb}{0,0.6,0.0}
\definecolor{Brown}{rgb}{0.45,0.0,0.05}
\definecolor{lime}{rgb}{0.00,0.8,0.0}
\definecolor{lblue}{rgb}{0.5,0.5,0.99}
\colorlet{hlcyan}{cyan!30}
\def\namedlabel#1#2{\begingroup
   \def\@currentlabel{#2}%
   \label{#1}\endgroup
}
\newcommand{\sepp}{\setlength{\itemsep}{-2pt}}
\newcommand{\moyo}[2]{\ensuremath{\sideset{^{#2}}{}{\operatorname{}}\!#1}}
\newcommand{\thalb}{\ensuremath{\tfrac{1}{2}}}
\newcommand{\menge}[2]{\big\{{#1}~\big |~{#2}\big\}}
\newcommand{\fenv}[1]%
{\ensuremath{\,\overrightarrow{\operatorname{env}}_{#1}}}
\newcommand{\benv}[1]%
{\ensuremath{\,\overleftarrow{\operatorname{env}}_{#1}}}
\newcommand{\scal}[2]{\left\langle{#1},{#2}  \right\rangle}
\newcommand{\RR}{\ensuremath{\mathbb R}}
\newcommand{\dom}{\ensuremath{\operatorname{dom}}}
\newcommand{\ran}{\ensuremath{{\operatorname{ran}}\,}}
\newcommand{\conv}{\ensuremath{\operatorname{conv}\,}}
\newcommand{\cdom}{\ensuremath{\overline{\operatorname{dom}}\,}}
\newcommand{\Fix}{\ensuremath{\operatorname{Fix}}}
\newcommand{\Id}{\ensuremath{\operatorname{Id}}}
\newcommand{\pinf}{\ensuremath{+\infty}}
\crefname{equation}{}{equations}
\crefname{chapter}{Appendix}{chapters}
\crefname{item}{}{items}
\crefname{enumi}{}{}
\theoremstyle{definition}
\newtheorem{theorem}{Theorem}[section]
\newtheorem{corollary}[theorem]{Corollary}
\newtheorem{proposition}[theorem]{Proposition}
\newtheorem{remark}[theorem]{Remark}
\providecommand{\RR}{\mathbb{R}}
\providecommand{\conv}{\operatorname{conv}}
\providecommand{\ran}{\operatorname{ran}}
\providecommand{\dom}{\operatorname{dom}}
\providecommand{\Id}{\operatorname{{ Id}}}
\providecommand{\ran}{\operatorname{ran}}
\providecommand{\Id}{\operatorname{Id}}
\newcommand{\cran}{\ensuremath{\overline{\operatorname{ran}}\,}}
\providecommand{\RR}{\mathbb{R}}
\definecolor{myblue}{rgb}{.8, .8, 1}
\begin{document}

\title{\textsc{
Resolvents and Yosida approximations \\
of displacement mappings of isometries\footnote{
Dedicated to Terry Rockafellar on the occasion of 
his 85th birthday
}}}

\author{
Salihah Alwadani\thanks{
Mathematics, University
of British Columbia,
Kelowna, B.C.\ V1V~1V7, Canada. E-mail:
\texttt{saliha01@mail.ubc.ca}.},~ 
Heinz H.\ Bauschke\thanks{
Mathematics, University
of British Columbia,
Kelowna, B.C.\ V1V~1V7, Canada. E-mail:
\texttt{heinz.bauschke@ubc.ca}.},~
Julian P.\ Revalski\thanks{
Institute of Mathematics and Informatics,
Bulgarian Academy of Sciences, 
Acad.\ G.\ Bonchev str., Block~8, 
1113~Sofia, Bulgaria. E-mail: 
\texttt{revalski@math.bas.bg}.},
and
Xianfu Wang\thanks{
Mathematics, University
of British Columbia,
Kelowna, B.C.\ V1V~1V7, Canada. E-mail:
\texttt{shawn.wang@ubc.ca}.}
}

\date{March 23, 2021} 
\maketitle

\vskip 8mm

\begin{abstract} \noindent
Maximally monotone operators are fundamental objects in modern optimization. 
The main classes of monotone operators are subdifferential operators and 
matrices with a positive semidefinite symmetric part. 
In this paper, we study a nice class of monotone operators: 
displacement mappings of isometries of finite order. We derive explicit
formulas for resolvents, Yosida approximations, and (set-valued and Moore-Penrose)
inverses. 
We illustrate our results by considering certain rational rotators and 
circular shift operators. 
\end{abstract}

{
\noindent
{\bfseries 2020 Mathematics Subject Classification:}
{Primary 
47H05, 
47H09; 
Secondary 
47A06, 
90C25. 
}

\noindent {\bfseries Keywords:}
Circular shift, 
displacement mapping,
isometry of finite order, 
maximally monotone operator,
Moore-Penrose inverse, 
nonexpansive mapping,
resolvent,
set-valued inverse,
Yosida approximation. 
}

\section{Introduction}

Throughout this paper, we assume that 
\begin{empheq}[box=\fbox]{equation*}
    \text{$X$ is
    a real Hilbert space with inner product
    $\scal{\cdot}{\cdot}\colon X\times X\to\RR$, }
\end{empheq}
and induced norm $\|\cdot\|$, 
that $X\neq\{0\}$, and that 
$R\colon X\to X$ is a linear isometry of 
finite order $m\in\{2,3,\ldots\}$:
\begin{empheq}[box=\fbox]{equation*}
R^m=\Id.
\end{empheq}
Here $\Id$ denotes the identity operator on $X$.
It follows that $R$ is surjective and that 
\begin{equation*}
  \text{ $\|R\|=1$ and hence $R$ is nonexpansive,}
\end{equation*}
i.e., Lipschitz continuous with constant $1$. 
Therefore, 
by, e.g., \cite[Theorem~VI.5.1]{Berberian},
\begin{equation*}
  R^* = R^{-1} = R^{m-1}.
\end{equation*}
We also define throughout the paper 
\begin{empheq}[box=\fbox]{equation*}
M := \Id-R. 
\end{empheq}
Following \cite[Exercise~12.16]{Rock98},  
we shall refer to $M$ as the \emph{displacement mapping} of $R$.
Indeed, \cite[Exercise~12.16]{Rock98} states that $M$
is \emph{maximally monotone} when $X=\RR^n$; this result 
remains true in general as well \cite[Example~20.29]{BC2017}.
Monotone operators play a major role in modern optimization due to the fact 
that their zeros are often solutions to inclusion or optimization problems.
For more on monotone operator theory, we refer the reader to 
\cite{BC2017}, 
\cite{Brezis},
\cite{BurIus},
\cite{Rock98},
\cite{Simons1},
\cite{Simons2}, 
\cite{Zeidler2a},
and 
\cite{Zeidler2b}.
The main examples of monotone operators are subdifferential operators of convex functions 
and positive semidefinite matrices. 

Displacement mappings of nonexpansive mappings 
form a nice class of monotone operators. 
These have turned out to be very useful in optimization.
Here are some examples. 
The papers \cite{21} and \cite{Victoria} on asymptotic regularity of 
projection mappings and firmly nonexpansive mappings rely critically on the displacement mapping $M$. 
The analysis of the range of the Douglas--Rachford operator in \cite{94} employed
displacement mappings to obtain duality results. Asymptotic regularity results 
for nonexpansive mappings were generalized in \cite{111} to displacement mappings. 
In turn, a new application of the Brezis--Haraux theorem led to a completion of this 
study in the recent paper \cite{125} along with sharp and limiting examples. 

\emph{The purpose of this paper is to present a comprehensive analysis of $M$
from the point of view of monotone operator theory. 
We provide elegant and explicit formulas for resolvents and Yosida approximations for 
$M$ and its inverse.}

The paper is organized as follows.
In \cref{sec:Julian}, we derive a formula for the resolvent of $\gamma M$ 
(see \cref{l:Julian}) and discuss asymptotic behaviour as $\gamma\to 0^+$ or 
$\gamma\to\pinf$. The set-valued and Moore-Penrose inverses of $M$ 
are provided in \cref{sec:inverse}. 
In \cref{sec:ResYo}, we obtain formulae for the resolvents and 
Yosida approximations. \cref{sec:examples} presents concrete 
examples based on rational rotators and circular shift operators. 
The final \cref{sec:conc} offers some concluding remarks. 
Notation is standard and follows largely \cite{BC2017}.

\section{The resolvent of $\gamma M$}

\label{sec:Julian}

Associated with any maximally monotone operator $A$ 
is the so-called resolvent $J_A := (A+\Id)^{-1}$ which turns out 
to be a nice firmly nonexpansive operator with full domain.
Resolvents not only provide an alternative view on monotone operators 
because one can recover the underlying maximally monotone operator 
via $J_{A}^{-1}-\Id$ but they also are crucial for the formulation 
of algorithms for finding zeros of $A$ (e.g., the celebrated 
proximal point algorithm \cite{Rockprox}). 

The main purpose of this section is to derive a 
formula for the resolvent $J_{\gamma M} := (\Id+\gamma M)^{-1}$, 
where 
\begin{empheq}[box=\fbox]{equation*}
  \gamma > 0. 
\end{empheq}
Because $\gamma M$
is still maximally monotone, 
the resolvent $J_{\gamma M}$ has full domain. 
Let us start with a result that holds true for displacement mappings 
of linear nonexpansive mappings.

\begin{theorem}
\label{t:genres}
Let $S\colon X\to X$ be nonexpansive and linear.
Then 
\begin{equation*}
  J_{\gamma(\Id-S)} = 
  \sum_{k=0}^\infty \frac{\gamma^k}{(1+\gamma)^{k+1}}S^k.
\end{equation*}
\end{theorem}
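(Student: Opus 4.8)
The plan is to reduce the computation of the resolvent to a Neumann series expansion. Since $J_{\gamma(\Id-S)} = (\Id+\gamma(\Id-S))^{-1}$, the first step is to rewrite the operator being inverted as
\[
  \Id+\gamma(\Id-S) = (1+\gamma)\Id-\gamma S = (1+\gamma)\Bigl(\Id-\tfrac{\gamma}{1+\gamma}S\Bigr),
\]
so that it suffices to invert $\Id-\tfrac{\gamma}{1+\gamma}S$ and then scale by $\tfrac{1}{1+\gamma}$.

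Next I would use that $S$ is linear and nonexpansive, hence $\|S\|\le 1$, so the bounded linear operator $T:=\tfrac{\gamma}{1+\gamma}S$ satisfies $\|T\|\le\tfrac{\gamma}{1+\gamma}<1$ because $\gamma>0$. The standard Neumann series result then guarantees that $\Id-T$ is a bijection of $X$ onto $X$ whose inverse is the operator-norm convergent series $(\Id-T)^{-1}=\sum_{k=0}^\infty T^k=\sum_{k=0}^\infty\bigl(\tfrac{\gamma}{1+\gamma}\bigr)^k S^k$, where we also used $T^k=\bigl(\tfrac{\gamma}{1+\gamma}\bigr)^k S^k$.

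Finally I would assemble the pieces:
\[
  (\Id+\gamma(\Id-S))^{-1}
  =\tfrac{1}{1+\gamma}(\Id-T)^{-1}
  =\tfrac{1}{1+\gamma}\sum_{k=0}^\infty\Bigl(\tfrac{\gamma}{1+\gamma}\Bigr)^k S^k
  =\sum_{k=0}^\infty\frac{\gamma^k}{(1+\gamma)^{k+1}}S^k,
\]
which is exactly the claimed formula; since $\gamma(\Id-S)$ is maximally monotone, this inverse is single-valued with full domain and coincides with the resolvent $J_{\gamma(\Id-S)}$. I do not expect a genuine obstacle: the only points to keep straight are the algebraic factorization and the strict inequality $\|T\|<1$, which is what forces geometric convergence of the series in operator norm, and the identification of the resulting bounded linear inverse with the resolvent — no further maximal monotonicity machinery is needed.
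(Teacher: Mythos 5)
Your proposal is correct and follows essentially the same route as the paper: factor $\Id+\gamma(\Id-S)=(1+\gamma)\bigl(\Id-\tfrac{\gamma}{1+\gamma}S\bigr)$ and invert via the Neumann series, using $\|\tfrac{\gamma}{1+\gamma}S\|\le\tfrac{\gamma}{1+\gamma}<1$. The only cosmetic difference is that the paper cites a textbook reference for the Neumann series rather than spelling it out.
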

\begin{proof}
We have, using \cite[Theorem~7.3-1]{Kreyszig} in \cref{e:201001d}, 
\begin{align}
J_{\gamma(\Id-S)}
&= \big(\Id+\gamma(\Id-S)\big)^{-1}
= \Big((1+\gamma)\big(\Id - \tfrac{\gamma}{1+\gamma}S\big)\Big)^{-1}\notag\\
&= \big(\Id-\tfrac{\gamma}{1+\gamma}S\big)^{-1}\circ \tfrac{1}{1+\gamma}\Id 
= \tfrac{1}{1+\gamma}\big(\Id-\tfrac{\gamma}{1+\gamma}S\big)^{-1}\notag\\
&= \tfrac{1}{1+\gamma}\sum_{k=0}^{\infty}\big(\tfrac{\gamma}{1+\gamma})^kS^k \label{e:201001d}
\end{align}
because $\|\gamma/(1+\gamma)S\| \leq \gamma/(1+\gamma)<1$. 
\end{proof}

\begin{corollary} {\rm\bf (resolvent of $\gamma M$)}
\label{l:Julian}
We have 
\begin{equation}
\label{e:201001c}
J_{\gamma M} = \frac{1}{(1+\gamma)^m-\gamma^m}
\sum_{k=0}^{m-1}(1+\gamma)^{m-1-k}\gamma^kR^k.
\end{equation}
\end{corollary}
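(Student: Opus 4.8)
The plan is to apply \cref{t:genres} with $S := R$ and then exploit the relation $R^m = \Id$ to fold the resulting infinite series into a finite sum. Since $R$ is a linear isometry, it is nonexpansive and linear, so \cref{t:genres} applies verbatim and gives
\[
  J_{\gamma M} = J_{\gamma(\Id-R)} = \sum_{k=0}^\infty \frac{\gamma^k}{(1+\gamma)^{k+1}}R^k ,
\]
with the series converging absolutely in operator norm because $\|R^k\|=1$ for every $k$ and $\sum_k \gamma^k/(1+\gamma)^{k+1}<\pinf$.

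Next I would regroup the terms according to the residue of $k$ modulo $m$. Writing $k = j + \ell m$ with $j\in\{0,1,\dots,m-1\}$ and $\ell\in\NN$, and using $R^{j+\ell m} = R^j(R^m)^\ell = R^j$, the (absolute) convergence justifies the rearrangement
\[
  J_{\gamma M} = \sum_{j=0}^{m-1}\Bigg(\sum_{\ell=0}^\infty \frac{\gamma^{j+\ell m}}{(1+\gamma)^{j+\ell m+1}}\Bigg)R^j
  = \sum_{j=0}^{m-1}\frac{\gamma^j}{(1+\gamma)^{j+1}}\Bigg(\sum_{\ell=0}^\infty \Big(\tfrac{\gamma}{1+\gamma}\Big)^{\ell m}\Bigg)R^j .
\]

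Then I would evaluate the inner geometric series: since $0<\gamma/(1+\gamma)<1$ we have $\bigl(\gamma/(1+\gamma)\bigr)^{m}<1$, so
\[
  \sum_{\ell=0}^\infty \Big(\tfrac{\gamma}{1+\gamma}\Big)^{\ell m}
  = \frac{1}{1-\gamma^m/(1+\gamma)^m}
  = \frac{(1+\gamma)^m}{(1+\gamma)^m-\gamma^m},
\]
where the denominator is strictly positive because $1+\gamma>\gamma>0$. Substituting this back and simplifying the coefficient of $R^j$ to $\gamma^j(1+\gamma)^{m-1-j}/\bigl((1+\gamma)^m-\gamma^m\bigr)$ yields exactly \cref{e:201001c}.

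The only genuinely delicate point is the interchange of the two summations, and this is handled by the absolute convergence of the original series in operator norm; everything else is a routine geometric-series computation. One could alternatively avoid the rearrangement entirely by multiplying the finite sum on the right-hand side of \cref{e:201001c} by $\Id+\gamma M = (1+\gamma)\Id - \gamma R$, using $R^m=\Id$ to telescope the product down to $(1+\gamma)^m-\gamma^m$ times $\Id$, and invoking bijectivity of $\Id+\gamma M$; I would mention this as a remark but present the series argument as the main proof.
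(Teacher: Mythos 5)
Your proposal is correct and follows essentially the same route as the paper: apply \cref{t:genres} with $S=R$, fold the infinite series using $R^m=\Id$ (your regrouping by residues modulo $m$ is just the paper's grouping into blocks of $m$ terms, made explicit with an absolute-convergence justification), and sum the resulting geometric series. The alternative direct verification you mention as a remark is a nice sanity check but not needed.
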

\begin{proof}
Using \cref{t:genres} and the assumption that $R$ is of finite order $m$, 
we have 
\begin{align*}
J_{\gamma M}
&= \frac{1}{1+\gamma}\sum_{k=0}^\infty \Big(\frac{\gamma}{1+\gamma}\Big)^kR^k \\
&= \frac{1}{1+\gamma}\bigg(\sum_{k=0}^{m-1} \Big(\frac{\gamma}{1+\gamma}\Big)^kR^k 
+ \Big(\frac{\gamma}{1+\gamma}\Big)^m\sum_{k=0}^{m-1} \Big(\frac{\gamma}{1+\gamma}\Big)^kR^k + \cdots \bigg)\\
&= \frac{1}{1+\gamma} \bigg(1 + \Big(\frac{\gamma}{1+\gamma}\Big)^m + \Big(\frac{\gamma}{1+\gamma}\Big)^{2m}+\cdots \bigg)
\sum_{k=0}^{m-1} \Big(\frac{\gamma}{1+\gamma}\Big)^kR^k\\
&= \frac{1}{1+\gamma} \frac{1}{\displaystyle 1 - \Big(\frac{\gamma}{1+\gamma}\Big)^m}
\sum_{k=0}^{m-1} \Big(\frac{\gamma}{1+\gamma}\Big)^kR^k\\
&=\frac{(1+\gamma)^{m-1}}{(1+\gamma)^m-\gamma^m}
\sum_{k=0}^{m-1} \Big(\frac{\gamma}{1+\gamma}\Big)^kR^k
\end{align*}
and the result follows. 
\end{proof}

\begin{remark}
Consider the formula for $J_{\gamma M}$ from \cref{l:Julian}. 
The coefficients for $R^k$ are positive and sum up to $1$; hence, 
\begin{equation*}
J_{\gamma M}\in\conv\{\Id,R,\ldots,R^{m-1}\}
\end{equation*}
and $J_{\gamma M}|_{\ker M}=\Id|_{\Fix R}$.
In particular, if $\Fix R = \ker M \supsetneqq\{0\}$, 
then $J_{\gamma M}$ cannot be a Banach contraction. 
\end{remark}

Next, we set 
\begin{empheq}[box=\fbox]{equation*}
D := \ker(M) = \Fix R,
\end{empheq}
which is a closed linear subspace of $X$. 
This allows us to describe the asymptotic behaviour of $J_{\gamma M}$
as $\gamma$ tends either to $0^+$ or to $\pinf$. 

\begin{proposition}
\label{p:primalasymp}
We have 
\begin{equation}
\label{e:primalasymp0}
  \lim_{\gamma\to 0^+} J_{\gamma M} = \Id
\end{equation}
and 
\begin{equation}
\label{e:primalasympinf}
  \lim_{\gamma\to \pinf} J_{\gamma M} = P_D = 
  \frac{1}{m}\sum_{k=0}^{m-1}R^k,
\end{equation}
where the limits are understood in the pointwise sense and
$P_D$ denotes the orthogonal projector onto $D$. 
\end{proposition}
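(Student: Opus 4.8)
The plan is to read both limits directly off the closed-form expression \eqref{e:201001c} for $J_{\gamma M}$ obtained in \cref{l:Julian}. For the limit as $\gamma\to 0^+$, observe that in
\[
J_{\gamma M} = \frac{1}{(1+\gamma)^m-\gamma^m}\sum_{k=0}^{m-1}(1+\gamma)^{m-1-k}\gamma^k R^k,
\]
every term in the sum with $k\geq 1$ carries a factor $\gamma^k\to 0$, while the $k=0$ term is $(1+\gamma)^{m-1}$, and the denominator $(1+\gamma)^m-\gamma^m\to 1$ since $m\geq 2$. Hence the whole operator converges (in operator norm, and a fortiori pointwise) to $\Id$. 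This is essentially immediate; alternatively one can invoke the general fact that $J_{\gamma A}\to\Id$ pointwise on $\overline{\dom A}=X$ as $\gamma\to 0^+$ for any maximally monotone $A$, but the algebraic argument is cleaner here.

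For the limit as $\gamma\to\pinf$, the plan is to renormalize: divide numerator and denominator by $\gamma^{m-1}$. Writing $t := 1/\gamma\to 0^+$, one has
\[
J_{\gamma M} = \frac{\gamma^{-(m-1)}\sum_{k=0}^{m-1}(1+\gamma)^{m-1-k}\gamma^k R^k}{\gamma^{-(m-1)}\big((1+\gamma)^m-\gamma^m\big)}.
\]
In the numerator, the $k$-th coefficient is $(1+\gamma)^{m-1-k}\gamma^{k-(m-1)} = \big((1+\gamma)/\gamma\big)^{m-1-k} = (1+t)^{m-1-k}\to 1$ for each $k\in\{0,\dots,m-1\}$, so the numerator tends to $\sum_{k=0}^{m-1}R^k$. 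For the denominator, $\gamma^{-(m-1)}\big((1+\gamma)^m-\gamma^m\big) = \gamma\big((1+t)^m-1\big) = \gamma\big(mt+O(t^2)\big) = m+O(t)\to m$. Therefore $J_{\gamma M}\to \frac1m\sum_{k=0}^{m-1}R^k$, again in operator norm hence pointwise.

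It remains to identify $\frac1m\sum_{k=0}^{m-1}R^k$ with the orthogonal projector $P_D$ onto $D=\ker M=\Fix R$. Set $Q := \frac1m\sum_{k=0}^{m-1}R^k$. First, $RQ = \frac1m\sum_{k=1}^{m}R^k = Q$ using $R^m=\Id$, so $\ran Q\subseteq\Fix R=D$; conversely, if $x\in D$ then $R^k x = x$ for all $k$, so $Qx=x$, giving $\ran Q = D$ and $Q|_D=\Id|_D$. Thus $Q$ is a linear idempotent with range $D$, so it is the projector onto $D$ along $\ker Q$; to see it is the \emph{orthogonal} projector it suffices to check $Q^*=Q$, which follows from $R^* = R^{-1} = R^{m-1}$ (recorded in the introduction): $Q^* = \frac1m\sum_{k=0}^{m-1}(R^{m-1})^k = \frac1m\sum_{k=0}^{m-1}R^{(m-1)k}$, and as $k$ ranges over $\{0,\dots,m-1\}$ so does $(m-1)k\bmod m$ (since $\gcd(m-1,m)=1$ and $R^m=\Id$), whence $Q^*=Q$. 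Equivalently, one notes $\|Qx\|\leq x$ for all $x$ because $Q\in\conv\{\Id,R,\dots,R^{m-1}\}$ and each $R^k$ is nonexpansive, and a norm-one idempotent on a Hilbert space is automatically an orthogonal projector. Either route finishes \eqref{e:primalasympinf}.

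The only mildly delicate point is the last identification, and specifically the self-adjointness of $Q$; everything else is routine manipulation of the explicit formula and two applications of $R^m=\Id$. I would present the self-adjointness via the norm-one-idempotent observation, as it avoids the index-permutation argument and reuses the already-noted nonexpansiveness of the $R^k$.
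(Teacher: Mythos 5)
Your proposal is correct. The computation of both limits from the closed form \cref{e:201001c} is essentially the paper's own calculation (the paper normalizes by $\gamma^m$ rather than $\gamma^{m-1}$, and likewise uses that the derivative of $\xi\mapsto\xi^m$ at $1$ is $m$), and for $\gamma\to 0^+$ the paper, like you, notes both the algebraic argument and the general fact $J_{\gamma A}\to P_{\cdom A}$. Where you genuinely diverge is in identifying the limit with $P_D$: the paper gets the left equality $\lim_{\gamma\to\pinf}J_{\gamma M}=P_D$ for free from the general asymptotic result \cite[Theorem~23.48(i)]{BC2017} (since $D=M^{-1}(0)$), so the explicit limit computation then \emph{proves} the identity $P_D=\frac1m\sum_{k=0}^{m-1}R^k$ as a byproduct; you instead verify directly that $Q:=\frac1m\sum_{k=0}^{m-1}R^k$ is the orthogonal projector onto $D$ (range equal to $D$, idempotence via $Q|_D=\Id|_D$, and self-adjointness from $R^*=R^{m-1}$ or from the norm-one-idempotent fact). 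Your route is more self-contained and elementary --- it needs no citation and actually yields operator-norm convergence, not just pointwise --- while the paper's route is shorter and, in a later remark, also connects the identity $P_D=\frac1m\sum_{k=0}^{m-1}R^k$ to the mean ergodic theorem. Two cosmetic points: the inequality should read $\|Qx\|\leq\|x\|$ (you dropped the norm on the right), and if you use the ``norm-one idempotent on a Hilbert space is an orthogonal projector'' fact you should either cite it or fall back on your index-permutation computation of $Q^*$, which is complete as written.
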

\begin{proof}
The main tool is \cref{l:Julian}.

The limit \cref{e:primalasymp0} follows from \cref{e:201001c}, and 
it follows also from 
\cite[Theorem~23.48]{BC2017}
by noting that 
$\dom M = \cdom M = X$ and 
hence $P_{\cdom M} = P_X =\Id$.

Let's turn to \cref{e:primalasympinf}. 
The left equation follows directly from 
\cite[Theorem~23.48(i)]{BC2017} because $D = M^{-1}(0)$. 
Finally, 
as $\gamma\to \pinf$, we have 
\begin{align*}
J_{\gamma M}
&= \frac{1}{\;\;\displaystyle \frac{(1+\gamma)^m-\gamma^m}{\gamma^m}\;\;}
\sum_{k=0}^{m-1}\frac{(1+\gamma)^{m-1-k}\gamma^k}{\gamma^m}R^k\\
&=\frac{1}{\big(1+\frac{1}{\gamma}\big)^m - 1^m}
\,\frac{1}{\gamma}\,\sum_{k=0}^{m-1}\big(1+\tfrac{1}{\gamma}\big)^{m-1-k}R^k\\
&=\frac{1}{\;\;\displaystyle \frac{\big(1+\frac{1}{\gamma}\big)^m - 1^m}{\frac{1}{\gamma}}\;\;}
\sum_{k=0}^{m-1}\big(1+\tfrac{1}{\gamma}\big)^{m-1-k}R^k\\
&\to \frac{1}{m}\sum_{k=0}^{m-1}R^k
\end{align*}
because the derivative of $\xi\mapsto \xi^m$ at $1$ is $m$. 
\end{proof}

\begin{remark}
\label{r:commute}
If both $T_1$ and $T_2$ are operators that are polynomials in $R$, then 
clearly $T_1$ and $T_2$ commute: 
\begin{equation*}
T_1T_2 = T_2T_1. 
\end{equation*}
In particular, by \cref{p:primalasymp}, 
both $P_D$ and $P_{D^\perp} = \Id - P_D$, the projector
onto $D$ and its orthogonal complement respectively, 
commute with any operator that is a polynomial in $R$. 
\end{remark}

We conclude this section with a connection to the mean ergodic theorem. 

\begin{remark}
The linear mean ergodic theorem (see, e.g.,
\cite[Theorem~II.11]{ReSi} 
and \cite[Chapter~X, Section~144]{RSz})
states that 
\begin{equation}
\label{e:201001b}
  P_{\Fix S} = \lim_{n\to\infty}\frac{1}{n}\sum_{k=0}^{n-1} S^k
\end{equation}
pointwise for any surjective isometric (or even just nonexpansive) linear operator $S 
\colon X\to X$. 
Because $R^m=\Id$ and $D=\Fix R$,  
\cref{e:201001b} yields
in particular as $N\to\pinf$ that 
\begin{equation*}
  P_D \leftarrow
  \frac{1}{Nm-1}\sum_{k=0}^{Nm-1}R^k
  = 
  \frac{N}{Nm-1}\sum_{k=0}^{m-1}R^k
  \rightarrow
  \frac{1}{m}\sum_{k=0}^{m-1}R^k, 
\end{equation*}
i.e., an alternative proof of the right identity in \cref{e:primalasympinf}.
\end{remark}

\section{The inverse $M^{-1}$ and Moore-Penrose inverse $M^\dagger$} 
\label{sec:inverse}

The operator $M$ is a continuous linear operator on $X$. 
Unfortunately, $M$ is neither injective nor surjective. 
This raises some very natural questions: 
What is the set-valued inverse $M^{-1}$? What is the Moore-Penrose inverse of $M$?
It is very satisfying that complete answers to these questions are possible,
and we provide these in this section. 
We start by considering the kernel and the range of $M$.

\begin{proposition}
We have 
\begin{equation}
\label{e:200523a}
\ker M = D = \ker M^*
\end{equation}
and 
\begin{equation}
\label{e:200523b}
\ran M = D^\perp = \ran M^*;
\end{equation}
in particular, $\ran M$ is closed. 
\end{proposition}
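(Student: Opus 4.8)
The plan is to obtain \eqref{e:200523a} directly from the definitions together with $R^*=R^{-1}=R^{m-1}$, and then to derive \eqref{e:200523b} by combining the standard Hilbert-space identity $\overline{\ran M}=(\ker M^*)^\perp$ with an explicit formula for a preimage under $M$.

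First I would record \eqref{e:200523a}. By definition $\ker M=\Fix R=D$. Since $R$ is a surjective linear isometry, $R^*=R^{-1}=R^{m-1}$, so $M^*=\Id-R^*$ is the displacement mapping of the linear isometry $R^*$, which again satisfies $(R^*)^m=\Id$, and $\Fix R^*=\Fix R^{-1}=\Fix R=D$ (from $Rx=x\iff x=R^{-1}x$). Hence $\ker M^*=\Fix R^*=D$, which proves \eqref{e:200523a}. I note in passing that $R^*$ fulfils the same standing hypotheses as $R$, so every statement proved for the pair $(R,M)$ carries over verbatim to $(R^*,M^*)$.

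Next I would turn to \eqref{e:200523b}. The identity $\overline{\ran M}=(\ker M^*)^\perp$ and \eqref{e:200523a} already give $\overline{\ran M}=D^\perp$, and likewise $\overline{\ran M^*}=(\ker M)^\perp=D^\perp$; so it remains only to show that $\ran M$ is closed, and I will in fact prove the sharper statement $\ran M=D^\perp$. Since $\ran M\subseteq\overline{\ran M}=D^\perp$, only the inclusion $D^\perp\subseteq\ran M$ requires work. Let $y\in D^\perp$. By \cref{p:primalasymp}, $P_D=\tfrac1m\sum_{k=0}^{m-1}R^k$, so $y\perp D$ forces $\sum_{k=0}^{m-1}R^ky=mP_Dy=0$. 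I then claim $x:=-\tfrac1m\sum_{k=1}^{m-1}kR^ky$ satisfies $Mx=y$: expanding $(\Id-R)x$, reindexing the shifted sum, and using $R^m=\Id$ yields $(\Id-R)x=\tfrac1m\big((m-1)y-\sum_{k=1}^{m-1}R^ky\big)$, and since $\sum_{k=1}^{m-1}R^ky=\sum_{k=0}^{m-1}R^ky-y=-y$ this equals $\tfrac1m\cdot my=y$. Hence $D^\perp\subseteq\ran M$, so $\ran M=D^\perp$ is closed. Applying the same computation to $R^*$ (legitimate by the remark above), or invoking the closed-range theorem, then gives $\ran M^*=(\Fix R^*)^\perp=D^\perp$.

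The step I expect to be the main obstacle is the surjectivity half, i.e.\ producing the preimage $x$. The naive ansatz $x=\sum_{k=0}^{m-1}c_kR^ky$ forces the cyclic difference equation $c_k-c_{k-1}=\delta_{k,0}$ (indices mod $m$), which has no solution because its right-hand side does not sum to zero around the cycle; the fix is to exploit $y\in D^\perp$, equivalently $\sum_{k=0}^{m-1}R^ky=0$, to recast the target so that the relevant data has zero cyclic sum, after which $x=-\tfrac1m\sum_{k=1}^{m-1}kR^ky$ works. The remaining ingredients — the reindexing and the two telescoping identities — are routine bookkeeping.
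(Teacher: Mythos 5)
Your proof is correct and follows essentially the same route as the paper: the heart in both cases is exhibiting, for $y\in D^\perp$, an explicit preimage that is a polynomial in $R$ applied to $y$, and indeed your candidate $x=-\tfrac1m\sum_{k=1}^{m-1}kR^ky$ equals the paper's $\tfrac1m\sum_{k=0}^{m-2}(m-1-k)R^ky$ whenever $y\in D^\perp$, since the two differ by $(m-1)P_Dy=0$. The only cosmetic differences are that you obtain $\ker M^*=D$ and $\ran M^*=D^\perp$ directly from $R^*=R^{-1}=R^{m-1}$ (a legitimate self-contained argument), whereas the paper cites \cite[Proposition~20.17]{BC2017} and the closed-range theorem, and you use $\sum_{k=0}^{m-1}R^ky=0$ up front while the paper keeps $y$ general and applies $P_{D^\perp}$ at the end.
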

\begin{proof}
By  \cite[Proposition~20.17]{BC2017},
\begin{equation}
\label{e:200523c}
\ker M = \ker M^* \text{~and~}  \cran M = \cran M^*.
\end{equation}
This and the definition of $D$ yield \cref{e:200523a}.

Now let $y\in X$.
Assume first that $y\in\ran M$.
Then $y\in \cran M = \cran M^* = (\ker M)^\perp = D^\perp$.
Conversely, we assume that 
$y\in D^\perp$ and we set 
\begin{equation*}
  x := \frac{1}{m}\sum_{k=0}^{m-2}(m-1-k)R^{k}y.
\end{equation*}
Using \cref{e:primalasympinf} in \cref{e:201001a}, we obtain 
\begin{align}
Mx 
&= (\Id-R)x\notag \\
&= \frac{1}{m}\sum_{k=0}^{m-2}(m-1-k)R^{k}y 
- \frac{1}{m}\sum_{k=0}^{m-2}(m-1-k)R^{k+1}y\notag \\
&= \frac{1}{m}\sum_{k=0}^{m-2}(m-1-k)R^{k}y 
- \frac{1}{m}\sum_{k=1}^{m-1}(m-k)R^{k}y\notag \\
&= \frac{m-1}{m}y -\frac{1}{m}R^{m-1}y
- \frac{1}{m}\sum_{k=1}^{m-2}R^k y\notag\\
&=
\Big(\Id -\frac{1}{m}\sum_{k=0}^{m-1}R^k \Big)y\notag\\
&= \big( \Id- P_D\big)y \label{e:201001a}\\
&= P_{D^\perp} y\notag \\
&= y.\notag
\end{align}
Hence $y=Mx \in \ran M$ and thus $D^\perp \subseteq \ran M$.
Altogether, we see that 
\begin{equation}
\label{e:200523d}
\ran M = D^\perp \text{~is closed.}
\end{equation}
The remaining conclusion follows by combining 
\cref{e:200523d}, \cref{e:200523c}, and the fact that 
$\ran M^*$ is closed (because $\ran M$ is and 
\cite[Corollary~15.34]{BC2017} applies).
\end{proof}

We now define the continuous linear operator 
\begin{empheq}[box=\fbox]{equation}
\label{e:defT}
T := \frac{1}{2m}\sum_{k=1}^{m-1} (m-2k)R^k
  \end{empheq}
which will turn out to be key to the study of $M^{-1}$. 

\begin{proposition}
\label{p:T}
The operator $T$ satisfies the following:
\begin{enumerate}
  \item 
  \label{p:T:floor}
  \begin{equation}
    \label{e:200520a}
  T = \frac{1}{2m}\sum_{k=1}^{\lfloor m/2 \rfloor}
  (m-2k)\big(R^k - R^{m-k}\big).
  \end{equation}
  \item 
  \label{p:T:skew} 
  $T^*=-T$, i.e., $T$ is a \emph{skew} linear operator. 
  \item 
  \label{p:T:ran} 
  $\ran T \subseteq D^\perp$. 
\end{enumerate}
\end{proposition}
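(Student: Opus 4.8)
The plan is to verify the three items in order, each by a direct computation exploiting only that $R^m=\Id$ (so indices of $R$ may be reduced modulo $m$) and that $R^*=R^{-1}=R^{m-1}$.

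For \ref{p:T:floor}, I would split the sum $\sum_{k=1}^{m-1}(m-2k)R^k$ according to whether $k\le\lfloor m/2\rfloor$ or $k>\lfloor m/2\rfloor$, and in the second block substitute $k\mapsto m-k$. Under this substitution the coefficient $m-2k$ becomes $m-2(m-k)=-(m-2k)$, and $R^k$ becomes $R^{m-k}$, so the two blocks combine into $\sum_{k=1}^{\lfloor m/2\rfloor}(m-2k)(R^k-R^{m-k})$. The only mild subtlety is the parity of $m$: when $m$ is even the middle term $k=m/2$ has coefficient $m-2k=0$, so it contributes nothing and the claimed formula (whose $k=m/2$ summand is $0\cdot(R^{m/2}-R^{m/2})=0$) is consistent; when $m$ is odd there is no middle term and the split is clean. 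I would dispatch this parity remark in one sentence rather than as a formal case split.

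Item \ref{p:T:skew} is then immediate from \ref{p:T:floor}: taking adjoints termwise and using $R^*=R^{m-1}$, we get $(R^k)^*=R^{m-k}$ and $(R^{m-k})^*=R^k$, hence $(R^k-R^{m-k})^*=R^{m-k}-R^k=-(R^k-R^{m-k})$. Since the coefficients $\tfrac{1}{2m}(m-2k)$ are real, $T^*=-T$ follows. Alternatively, one can apply $(\cdot)^*$ directly to \cref{e:defT}: reindexing $k\mapsto m-k$ sends $\sum_{k=1}^{m-1}(m-2k)R^{m-k}$ back to $\sum_{k=1}^{m-1}(2k-m)R^k=-\sum_{k=1}^{m-1}(m-2k)R^k$. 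Either route is a two-line computation.

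For \ref{p:T:ran}, by \cref{e:200523b} we have $D^\perp=\ran M$, and $\ran M$ is closed; equivalently $D^\perp=(\Fix R)^\perp$, so it suffices to show that for every $x\in X$ the vector $Tx$ is orthogonal to $\Fix R$. If $d\in\Fix R$ then $R^k d=d$ and $\scal{R^k x}{d}=\scal{x}{R^{m-k}d}=\scal{x}{d}$ for every $k$, so $\scal{Tx}{d}=\tfrac{1}{2m}\sum_{k=1}^{m-1}(m-2k)\scal{x}{d}=\tfrac{1}{2m}\big(\sum_{k=1}^{m-1}(m-2k)\big)\scal{x}{d}$, and $\sum_{k=1}^{m-1}(m-2k)=(m-1)m-2\cdot\tfrac{(m-1)m}{2}=0$; hence $\scal{Tx}{d}=0$. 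A slicker variant: since $T$ is skew by \ref{p:T:skew}, $\ran T\subseteq\overline{\ran T}=(\ker T^*)^\perp=(\ker T)^\perp\subseteq(\Fix R)^\perp$ once one observes $\Fix R\subseteq\ker T$, which is exactly the computation $Td=\tfrac{1}{2m}(\sum_k(m-2k))d=0$ just performed. I would present the direct inner-product argument as the primary proof and mention the skew-operator shortcut parenthetically.

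None of the three items presents a genuine obstacle; the only place demanding a moment's care is the even-versus-odd bookkeeping in \ref{p:T:floor}, and the only "trick" worth isolating is the elementary identity $\sum_{k=1}^{m-1}(m-2k)=0$, which drives both \ref{p:T:ran} and the skew-symmetry viewpoint.
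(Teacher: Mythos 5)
Your proposal is correct. Items \ref{p:T:floor} and \ref{p:T:skew} are argued exactly as in the paper: the parity split with the substitution $k\mapsto m-k$ is what the paper means by ``considering two cases,'' and the termwise adjoint computation $(R^k-R^{m-k})^*=-(R^k-R^{m-k})$ is the paper's skew-part argument for \ref{p:T:skew}. For \ref{p:T:ran}, however, you take a genuinely different and shorter route. The paper proves $\ran T\subseteq D^\perp$ by writing $P_{D^\perp}$ as a polynomial in $R$ (via \cref{e:primalasympinf}), multiplying it against the polynomial defining $T$ in \cref{e:defT}, and matching coefficients of $\Id,R,\dots,R^{m-1}$ to conclude $P_{D^\perp}T=T$. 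You instead show directly that $\scal{Tx}{d}=0$ for every $x\in X$ and $d\in D=\Fix R$, using $\scal{R^kx}{d}=\scal{x}{R^{m-k}d}=\scal{x}{d}$ and the elementary identity $\sum_{k=1}^{m-1}(m-2k)=0$ (equivalently, via skewness and $D\subseteq\ker T$). Both arguments are complete and yield the same statement, since $P_{D^\perp}T=T$ is equivalent to $\ran T\subseteq D^\perp$; yours avoids the coefficient bookkeeping entirely, while the paper's polynomial-multiplication technique is the same device it reuses later for the Moore--Penrose computation in \cref{e:250523h}, so it earns its keep there.
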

\begin{proof}
\cref{p:T:floor}: 
This follows easily by considering two cases ($m$ is odd and $m$ is even).

\cref{p:T:skew}:
The skew part of $R^k$ 
is $\thalb(R^k-(R^k)^*)
=\thalb(R^k-(R^{*})^k)
=\thalb(R^k-(R^{-1})^k)
=\thalb(R^k-R^{m-k})$. 
Hence each term in the sum \eqref{e:200520a} is skew, and therefore so is $T$.

\cref{p:T:ran}:
The formula for $P_D$ in \cref{e:primalasympinf} 
yields 
\begin{equation*}
  P_{D^\perp} = \frac{m-1}{m}\Id - \frac{1}{m}\sum_{i=1}^{m-1}R^i.
\end{equation*}
Using this and \cref{e:defT}, we obtain (using the empty-sum convention)
\begin{align*}
  &\qquad 
2m^2P_{D^\perp}T
= 
\bigg((m-1)\Id - \sum_{i=1}^{m-1}R^i \bigg)
\bigg(\sum_{j=1}^{m-1}(m-2j)R^j \bigg)\\
&= \Big( - \sum_{i=1}^{m-1}(m-2(m-i)) \Big)\Id \\
&\qquad + \sum_{k=1}^{m-1}\bigg((m-1)(m-2k) -\sum_{i=1}^{k-1}(m-2(k-i))- 
\sum_{i=k+1}^{m-1}(m-2(m+k-i)) \bigg)R^k\\
&= (0)\Id + \sum_{k=1}^{m-1}\Big( (m-1)(m-2k)
-(k-1)(m-k) + k(m-1-k)
\Big)R^k\\
&= \sum_{k=1}^{m-1} m(m-2k)R^k
=2m^2 T.
\end{align*}
Hence $P_{D^\perp}T = T$; equivalently, $\ran T \subseteq D^\perp$. 
\end{proof}

We are now able to provide a formula for the inverse of $M$.

\begin{theorem}
\label{t:Minv}
We have 
\begin{equation*}
  M^{-1}= \thalb\Id + T + N_{D^\perp},
\end{equation*}
where $N_{D^\perp} = \partial \iota_{D^\perp}$ 
denotes the normal cone operator of $D^\perp$. 
\end{theorem}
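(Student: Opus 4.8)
The plan is to prove the asserted identity at the level of graphs. Recall that $\gra M^{-1}=\{(y,x)\mid Mx=y\}$, and that for the closed subspace $D^\perp$ one has the standard description $N_{D^\perp}(y)=(D^\perp)^\perp=D$ when $y\in D^\perp$ and $N_{D^\perp}(y)=\emp$ when $y\notin D^\perp$ (here $(D^\perp)^\perp=D$ because $D$ is a closed subspace). Hence $x\in\thalb y+Ty+N_{D^\perp}(y)$ precisely when $y\in D^\perp$ and $x-\thalb y-Ty\in D$. So it suffices to establish, for all $x,y\in X$, the equivalence
\[
Mx=y\quad\Longleftrightarrow\quad y\in D^\perp\ \text{ and }\ x-\thalb y-Ty\in D .
\]

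The one computational ingredient is the operator identity
\[
M\big(\thalb\Id+T\big)=(\Id-R)\big(\thalb\Id+T\big)=\Id-\tfrac1m\textstyle\sum_{k=0}^{m-1}R^k=P_{D^\perp},
\]
where the last equality uses the formula for $P_D$ in \cref{e:primalasympinf}. To verify it I would first absorb $\thalb\Id$ into the sum, noting $\thalb\Id+T=\frac1{2m}\sum_{k=0}^{m-1}(m-2k)R^k$ since the bracketed coefficient at $k=0$ equals $m$. Applying $\Id-R$ and re-indexing gives $\frac1{2m}\big(\sum_{k=0}^{m-1}(m-2k)R^k-\sum_{k=1}^{m}(m-2(k-1))R^k\big)$; the $R^m$ term collapses to a multiple of $\Id$ via $R^m=\Id$, the coefficient of $R^k$ for $1\le k\le m-1$ telescopes to $-2$, and the $\Id$ coefficient becomes $2m-2$, leaving $\frac1{2m}\big((2m-2)\Id-2\sum_{k=1}^{m-1}R^k\big)=\Id-\frac1m\sum_{k=0}^{m-1}R^k$. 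This coefficient bookkeeping is the only genuine calculation in the proof and is the step most prone to slips; everything else is formal.

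With the identity in hand the equivalence is immediate. If $Mx=y$, then $y=Mx\in\ran M=D^\perp$ by \cref{e:200523b}, and since $y\in D^\perp$ we get $M(x-\thalb y-Ty)=Mx-M(\thalb\Id+T)y=y-P_{D^\perp}y=y-y=0$, so $x-\thalb y-Ty\in\ker M=D$. Conversely, if $y\in D^\perp$ and $x-\thalb y-Ty\in D=\ker M$, then $Mx=M(\thalb\Id+T)y+M(x-\thalb y-Ty)=P_{D^\perp}y+0=y$. Combining this equivalence with the description of $N_{D^\perp}$ above yields $M^{-1}=\thalb\Id+T+N_{D^\perp}$, completing the proof.
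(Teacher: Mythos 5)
Your proof is correct, and its key computation checks out: with $\thalb\Id+T=\tfrac{1}{2m}\sum_{k=0}^{m-1}(m-2k)R^k$, applying $\Id-R$ and re-indexing does give coefficient $2m-2$ on $\Id$ and $-2$ on each $R^k$ ($1\le k\le m-1$), i.e.\ $M(\thalb\Id+T)=P_{D^\perp}$. Your route differs in organization from the paper's. The paper fixes $y\in D^\perp$, picks a preimage $x\in M^{-1}y$, and reduces the coset identity $x+D-\thalb y=Ty+D$ to the pointwise statement $M\bigl(2T(x-Rx)\bigr)=x-R^2x$, which it verifies by a summation computation in the variable $x$; in operator form this amounts to $2MTM=M(\Id+R)$. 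You instead prove the cleaner operator identity $M(\thalb\Id+T)=P_{D^\perp}$ once and for all, after which the graph equality $\gra M^{-1}=\gra(\thalb\Id+T+N_{D^\perp})$ is purely formal, using only $\ker M=D$, $\ran M=D^\perp$ (\cref{e:200523a}, \cref{e:200523b}), and $N_{D^\perp}(y)=D$ for $y\in D^\perp$. Both arguments ultimately rest on the same kind of telescoping calculation with a polynomial in $R$, but your version avoids juggling cosets and a chosen preimage, and the identity you isolate has independent value: it re-proves $D^\perp\subseteq\ran M$ and is essentially the relation $MM^\dagger=P_{\ran M}$ that the paper invokes later for the Moore--Penrose inverse. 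The paper's version, on the other hand, stays entirely within the set-valued framework it set up ($M^{-1}y=x+D$) and needs no description of $N_{D^\perp}$ beyond its definition. Either way the proof is complete; no gaps.
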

\begin{proof}
Using \cref{e:200523b}, we observe that 
$\dom(T+N_{D^\perp}) = D^\perp = \ran M
= \dom M^{-1} = \dom (M^{-1}-\thalb\Id)$. 
So pick an arbitrary 
\begin{equation*}
y\in D^\perp.
\end{equation*}
In view of the definition of $N_{D^\perp}$, it suffices to show that 
\begin{equation*}
  M^{-1}y - \thalb y \stackrel{?}{=} Ty + D.
\end{equation*}
Let $x\in M^{-1}y$. Then $Mx=y$ and $M^{-1}y = x + \ker M = x +D$ 
by \cref{e:200523a}. 
Hence we must show that 
\begin{equation*}
  x+D - \thalb y \stackrel{?}{=} Ty + D,
\end{equation*}
which is equivalent to 
$  x+D - \thalb (x-Rx) \stackrel{?}{=} T(x-Rx) + D$ 
and to 
\begin{equation*}
  (x+Rx) + D  \stackrel{?}{=} 2T(x-Rx) + D.
\end{equation*}
Note that 
$P_{D^\perp}(x+Rx) = P_{D^\perp}(2T(x-Rx))$
$\Leftrightarrow$ 
$2T(x-Rx)-(x+Rx)\in D=\ker M$
$\Leftrightarrow$ 
$M(2T(x-Rx)) = M(x+Rx)=(\Id-R)(x+Rx)$.
Hence we must prove that 
\begin{equation}
\label{e:200523e}
  M\big(2T(x-Rx)\big) \stackrel{?}{=} x - R^2x.
\end{equation}
We now work toward the proof of \cref{e:200523e}.
First, note that  
\begin{align*}
2mT(x-Rx)
&= 
\sum_{k=1}^{m-1}(m-2k)R^k(x-Rx)\\
&= 
\sum_{k=1}^{m-1}(m-2k)R^kx
-\sum_{k=1}^{m-1}(m-2k)R^{k+1}x\\
&= 
\sum_{k=1}^{m-1}(m-2k)R^kx
-\sum_{k=2}^{m}(m+2-2k)R^{k}x\\
&= 
(m-2)x + (m-2)Rx -2\sum_{k=2}^{m-1}R^kx,
\end{align*}
which implies
\begin{equation}
\label{e:200523f}
  2T(x-Rx)
  = \frac{m-2}{m}x + \frac{m-2}{m}Rx 
  - \frac{2}{m}\sum_{k=2}^{m-1}R^kx.
\end{equation}
Using \cref{e:200523f}, we see that 
\begin{align*}
M\big(2T(x-Rx)\big)
&=(\Id-R)\big(2T(x-Rx)\big) \\
&=\big(2T(x-Rx) \big) - R\big(2T(x-Rx) \big)\\
&= \frac{m-2}{m}x + \frac{m-2}{m}Rx 
- \frac{2}{m}\sum_{k=2}^{m-1}R^kx \\
&\quad -R\bigg(\frac{m-2}{m}x + \frac{m-2}{m}Rx 
- \frac{2}{m}\sum_{k=2}^{m-1}R^kx\bigg)\\
&= \frac{m-2}{m}x - \frac{2}{m}\sum_{k=2}^{m-1}R^kx 
-\frac{m-2}{m}R^2x + \frac{2}{m}\sum_{k=2}^{m-1}R^{k+1}x\\
&= \frac{m-2}{m}x - \frac{2}{m}R^2x -\frac{m-2}{m}R^2x + \frac{2}{m}x\\
&=x - R^2x, 
\end{align*}
i.e., \cref{e:200523e} does hold, 
as desired!
\end{proof}

\begin{remark}
\label{r:skewsharp}
Consider \cref{t:Minv}.
Then $M^{-1}-\thalb\Id$ is monotone; in other words, 
$M^{-1}$ is \emph{$\thalb$-strongly monotone}.
If $R\neq\Id$, then 
$D\neq X$; hence 
$D^\perp\supsetneqq \{0\}$
and $T|_{D^\perp}-\varepsilon\Id$ cannot 
be monotone because $T$ is skew. 
It follows that the constant $\thalb$ is sharp:
\begin{equation*}
\text{
$M^{-1}$ is not $\sigma$-strongly monotone 
if $R\neq\Id$ and $\sigma > \thalb$.
}
\end{equation*}
\end{remark}

Recall that given $y\in X$, the vector $M^\dagger y$
is the (unique) minimum norm vector among all the solution vectors $x$ 
to the least squares problem
\begin{equation*}
\|Mx-y\| = \min_{z\in X}\|Mz-y\|. 
\end{equation*}
(We refer the reader to \cite{Groetsch} for further information on generalized inverses.)
We now present a pleasant formula for 
the Moore-Penrose inverse $M^\dagger$ of $M$.

\begin{theorem}
The Moore-Penrose inverse of $M$ is
\begin{equation}
\label{e:250523h}
 M^\dagger = \thalb P_{D^\perp}(\Id+2T) = \sum_{k=0}^{m-1} \frac{m-1-2k}{2m}R^{k}.
\end{equation}
\end{theorem}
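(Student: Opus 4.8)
The plan is to read off the Moore--Penrose inverse from the explicit description of the set-valued inverse $M^{-1}$ obtained in \cref{t:Minv}, and then to simplify. Recall first the abstract picture: since $\ker M = D$ and $\ran M = D^\perp$ are closed and complementary (by \cref{e:200523a}--\cref{e:200523b}), the operator $M$ restricted to $D^\perp$ is a linear bijection onto $\ran M = D^\perp$, and for each $y\in X$ the minimizers of $z\mapsto\|Mz-y\|$ form the coset $M^{-1}(P_{D^\perp}y) = z_0 + D$ for any particular solution $z_0$ of $Mz_0 = P_{D^\perp}y$; the minimum-norm one is $P_{D^\perp}z_0$. Hence $M^\dagger y$ equals the unique element of $D^\perp$ sent by $M$ to $P_{D^\perp}y$.

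First I would fix $y\in X$, set $z:=P_{D^\perp}y\in D^\perp = \dom M^{-1}$, and apply \cref{t:Minv}: the solution set of $Mx=z$ is $M^{-1}z = \thalb z + Tz + D$. The crucial observation is that $\thalb z + Tz$ already lies in $D^\perp$: indeed $z\in D^\perp$ by construction and $Tz\in\ran T\subseteq D^\perp$ by \cref{p:T}\cref{p:T:ran}. Therefore $\thalb z + Tz$ is the representative of $M^{-1}z$ lying in $D^\perp$, which gives
\begin{equation*}
  M^\dagger y = \thalb P_{D^\perp}y + T P_{D^\perp}y .
\end{equation*}

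Second, I would put this in the stated closed form. Since $T$ and $P_D=\tfrac1m\sum_{k=0}^{m-1}R^k$ (from \cref{e:primalasympinf}) are both polynomials in $R$, they commute (\cref{r:commute}); moreover $P_DT=0$ because $\ran T\subseteq D^\perp$, whence also $TP_D = (P_DT^*)^* = (-P_DT)^* = 0$ using \cref{p:T}\cref{p:T:skew}. Thus $T P_{D^\perp} = T(\Id - P_D) = T$ and likewise $P_{D^\perp}T = T$, so
\begin{equation*}
  M^\dagger = \thalb P_{D^\perp} + T = \thalb P_{D^\perp} + P_{D^\perp}T = \thalb P_{D^\perp}(\Id + 2T),
\end{equation*}
which is the first claimed identity. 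Finally, substituting $P_{D^\perp} = \Id - P_D = \tfrac{m-1}{m}\Id - \tfrac1m\sum_{k=1}^{m-1}R^k$ and $T = \tfrac1{2m}\sum_{k=1}^{m-1}(m-2k)R^k$ (from \cref{e:defT}) into $M^\dagger = \thalb P_{D^\perp} + T$ and collecting the coefficient of $R^k$, the constant term is $\tfrac{m-1}{2m}$ (which is $\tfrac{m-1-2k}{2m}$ at $k=0$) and for $1\le k\le m-1$ the coefficient is $-\tfrac1{2m} + \tfrac{m-2k}{2m} = \tfrac{m-1-2k}{2m}$, giving $M^\dagger = \sum_{k=0}^{m-1}\tfrac{m-1-2k}{2m}R^k$.

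The argument is essentially bookkeeping once \cref{t:Minv} and \cref{p:T} are available; the only genuinely load-bearing point is that the particular solution $\thalb z + Tz$ furnished by $M^{-1}$ already lies in $D^\perp = (\ker M)^\perp$, so that it is the minimum-norm solution without any further projection — this is exactly where $\ran T\subseteq D^\perp$ is used. I do not anticipate a real obstacle; as an alternative one could instead verify the four Moore--Penrose equations for the right-hand side directly, but that route is longer.
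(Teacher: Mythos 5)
Your proof is correct, and its backbone is the same as the paper's: both arguments rest on \cref{t:Minv} together with \cref{p:T}\cref{p:T:ran}, and both pass through the intermediate form $\thalb P_{D^\perp}(\Id+2T)$ before extracting the coefficients of $\Id,R,\ldots,R^{m-1}$. The differences are in the packaging, and they are genuine simplifications. First, the paper invokes the identity $M^\dagger=P_{\ran M^*}\circ M^{-1}\circ P_{\ran M}$ from the literature and then simplifies using the fact that polynomials in $R$ commute; you instead re-derive this reduction from the definition of $M^\dagger$ as the minimum-norm least-squares solution, the load-bearing point being that the particular solution $\thalb z+Tz$ supplied by \cref{t:Minv} at $z=P_{D^\perp}y$ already lies in $D^\perp=(\ker M)^\perp$ (here you correctly rely on the closedness of $\ran M=D^\perp$ established in \cref{e:200523b}), so no further projection is needed. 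This makes your argument self-contained. Second, you compute the closed form from the additive expression $M^\dagger=\thalb P_{D^\perp}+T$, which is legitimate since $P_{D^\perp}T=TP_{D^\perp}=T$ (your two justifications, commutation of polynomials in $R$ and skewness of $T$ combined with $\ran T\subseteq D^\perp$, are both valid); the coefficient bookkeeping then reduces to $\tfrac{m-1}{2m}$ at $k=0$ and $-\tfrac{1}{2m}+\tfrac{m-2k}{2m}=\tfrac{m-1-2k}{2m}$ for $1\le k\le m-1$, which checks out. The paper instead expands the product $\thalb P_{D^\perp}(\Id+2T)$ of two polynomials in $R$ modulo $R^m=\Id$ and collects cross terms, a noticeably heavier computation; your route buys a shorter verification at no cost in generality.
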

\begin{proof}
Recall that by \cref{e:primalasympinf}
\begin{equation*}
  \Id-P_D = P_{D^\perp} = \Id - \frac{1}{m}\sum_{k=0}^{m-1}R^k,
\end{equation*}
which is a polynomial in $R$. 
Using also \cite[Proposition~2.1]{Victoria} and 
the fact that $T$ is also a polynomial in $R$ (see Remark~\ref{r:commute}), we have 
\begin{align}
M^\dagger &= P_{\ran M^*} \circ M^{-1} \circ P_{\ran M}
= P_{D^\perp} \circ ( \thalb\Id + T + N_{D^\perp})\circ P_{D^\perp}\notag\\
&= \thalb P_{D^\perp} + P_{D^\perp}TP_{D^\perp}\notag\\
&= \thalb P_{D^\perp}(\Id+2T)\notag\\
&=\frac{1}{2m} \Big((m-1)\Id - \sum_{i=1}^{m-1}R^i\Big)\big(\Id+2T\big)\notag\\
&=\frac{1}{2m} \Big((m-1)\Id - \sum_{i=1}^{m-1}R^i\Big)
\Big(\Id + \frac{1}{m}\sum_{j=1}^{m-1}(m-2j)R^j \Big)\notag\\
&=\frac{1}{2m^2} \Big((m-1)\Id - \sum_{i=1}^{m-1}R^i\Big)
\Big(m\Id + \sum_{j=1}^{m-1}(m-2j)R^j \Big)\notag\\
&=\frac{1}{2m^2} \Big((m-1)\Id - \sum_{i=1}^{m-1}R^i\Big)
\Big(\sum_{j=0}^{m-1}(m-2j)R^j \Big). \label{e:250523g}
\end{align}
We thus established the left identity in \cref{e:250523h}.
To obtain the right identity in \cref{e:250523h}, 
we use the very last expression \cref{e:250523g} for $M^\dagger$ and compute 
the coefficients of $\Id,R,R^2,\ldots,R^{m-1}$. 

The coefficient for $\Id$ is 
\begin{equation*}
  \frac{1}{2m^2}\Big((m-1)m - \sum_{i=1}^{m-1}(m-2(m-i)) \Big)
  = \frac{m-1}{2m}
\end{equation*}
as needed.
The coefficient for $R$ is 
\begin{equation*}
  \frac{1}{2m^2}\Big((m-1)(m-2)-(m) -\sum_{i=2}^{m-1}(m-2(m+1-i)) \Big)
  = \frac{m-3}{2m}
\end{equation*}
as needed.
The coefficient for the general $R^k$ is 
\begin{equation*}
  \frac{\displaystyle (m-1)(m-2i)-\sum_{i=1}^{k}(m-2(k-i)) 
  - \sum_{i=k+1}^{m-1}(m-2(m+k-i))}{2m^2}
  = \frac{m-1-2k}{2m}
\end{equation*}
as needed. We thus verified the right equality in \cref{e:250523h}. 
\end{proof}

\begin{corollary}
For all $y\in\ran M = D^\perp$, we have 
\begin{equation*}
  M^{-1}y = M^\dagger y + D = 
  D +  \sum_{k=0}^{m-1} \frac{m-1-2k}{2m}R^{k}y. 
\end{equation*}
\end{corollary}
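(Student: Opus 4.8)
The plan is to read the result off directly from \cref{t:Minv} and the closed form for $M^\dagger$, the only substantive points being that the normal-cone term collapses to $D$ and that the projector $P_{D^\perp}$ acts trivially on the vectors that appear.

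First I would fix $y\in\ran M = D^\perp$ and apply \cref{t:Minv}, which gives $M^{-1}y = \thalb y + Ty + N_{D^\perp}(y)$. Since $D^\perp$ is a closed linear subspace and $y\in D^\perp$, the normal cone of $D^\perp$ at $y$ is $N_{D^\perp}(y) = (D^\perp)^\perp = D$; hence $M^{-1}y = \thalb y + Ty + D$. (This step implicitly uses \cref{e:200523b}, which guarantees $y$ is indeed in $\dom M^{-1} = \ran M = D^\perp$.)

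Next I would compare this with the Moore--Penrose formula $M^\dagger = \thalb P_{D^\perp}(\Id+2T)$ from \cref{e:250523h}. Because $y\in D^\perp$ we have $P_{D^\perp}y = y$, and because $\ran T\subseteq D^\perp$ (established in \cref{p:T}) we also have $P_{D^\perp}(2Ty) = 2Ty$; therefore $M^\dagger y = \thalb(y + 2Ty) = \thalb y + Ty$. Combining this with the previous display yields $M^{-1}y = M^\dagger y + D$, and substituting the right-hand expression for $M^\dagger$ from \cref{e:250523h} produces the stated sum $D + \sum_{k=0}^{m-1}\tfrac{m-1-2k}{2m}R^k y$.

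There is essentially no obstacle here: the corollary is a bookkeeping consequence of the two preceding theorems. The one place deserving a sentence of care is the identification $N_{D^\perp}(y) = D$ for $y$ in the subspace $D^\perp$, together with the observation that $P_{D^\perp}$ restricts to the identity on both $y$ and $Ty$, so that the set-valued inverse and the Moore--Penrose inverse differ by exactly the subspace $D = \ker M$.
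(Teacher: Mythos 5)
Your argument is correct, but it follows a different route than the paper. The paper's proof is a one-liner: it invokes the standard Moore--Penrose identity $MM^\dagger = P_{\ran M}$ (citing \cite[Proposition~3.31]{BC2017}), so for $y\in\ran M$ one gets $M(M^\dagger y)=y$, i.e.\ $M^\dagger y$ is a particular solution, and then $M^{-1}y = M^\dagger y + \ker M = M^\dagger y + D$ by linearity of $M$; no appeal to the explicit descriptions of $M^{-1}$ or $M^\dagger$ is needed beyond the closed form used to state the final sum. You instead reconcile the two explicit formulas directly: from \cref{t:Minv} you get $M^{-1}y=\thalb y + Ty + N_{D^\perp}(y)$ with $N_{D^\perp}(y)=(D^\perp)^\perp = D$ since $y$ lies in the closed subspace $D^\perp$, and from \cref{e:250523h} together with $P_{D^\perp}y=y$ and $\ran T\subseteq D^\perp$ (\cref{p:T}) you get $M^\dagger y=\thalb y+Ty$. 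Both steps are sound, so your proof is complete; it is somewhat longer and leans on the two preceding structural theorems, but it has the side benefit of exhibiting the consistency of \cref{t:Minv} with the Moore--Penrose formula, whereas the paper's argument is shorter, more abstract, and would apply verbatim to any continuous linear operator with closed range.
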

\begin{proof}
It is well known (see, e.g., \cite[Proposition~3.31]{BC2017}) that 
$MM^\dagger = P_{\ran M}$ which readily implies the conclusion. 
\end{proof}

\begin{remark} 
We mention in passing that the results in this section combined with work 
on decompositions of monotone linear relations lead to a
Borwein-Wiersma decomposition
\begin{equation*}
 M^{-1} = \partial\big( \tfrac{1}{4}\|\cdot\|^2 + \iota_{D^\perp}\big) + T
\end{equation*}
of $M^{-1}$. The required background is nicely detailed in 
Liangjin Yao's PhD thesis \cite{Liangjin}. 
\end{remark}

\section{Resolvents and Yosida approximations}

\label{sec:ResYo}

Given a maximally monotone operator $A$ on $X$,
recall that its resolvent is defined by $J_A =
(\Id+A)^{-1}$. 
We have computed the resolvent $J_{\gamma M}$ 
already in \cref{l:Julian}. In this section, 
we present a formula for $J_{\gamma M^{-1}}$.
Moreover, we provide formulas for 
the \emph{Yosida approximations} $\moyo{M}{\gamma}$
and $\moyo{M^{-1}}{\gamma}$, and we also discuss Lipschitz properties 
of $J_{\gamma M}$. 
Recall that the Yosida approximation of $A$ of index $\gamma>0$
is defined by $\moyo{A}{\gamma} := \tfrac{1}{\gamma}(\Id-J_{\gamma A})$.
Yosida approximations are powerful tools to study monotone
operators. They can be viewed as regularizations and 
approximations of $A$ because  $\moyo{A}{\gamma}$ is a single-valued 
Lipschitz continuous operator on $X$ and $\moyo{A}{\gamma}$ approximates 
$A$ in the sense that 
$\moyo{A}{\gamma}x\to P_{Ax}(0)\in Ax$ as $\gamma\to 0^+$. 
(For this and more, see, e.g., \cite[Chapter~23]{BC2017}.) 

Recall that we proved in \cref{l:Julian} that 
\begin{equation}
\label{e:Julianagain}
J_{\gamma M} = \frac{1}{(1+\gamma)^m-\gamma^m}
\sum_{k=0}^{m-1}(1+\gamma)^{m-1-k}\gamma^kR^k;
\end{equation}
indeed, this will give us the following result quickly.

\begin{theorem}
We have 
\begin{equation}
\label{e:Jinv}
  J_{\gamma M^{-1}} = \Id-J_{(1/\gamma)M},
\end{equation}
\begin{equation}
\label{e:moyo}
  \moyo{M}{\gamma}= \tfrac{1}{\gamma}\big(\Id-J_{\gamma M} \big),
\end{equation}
and 
\begin{equation}
\label{e:moyoinv}
  \moyo{(M^{-1})}{\gamma}= \tfrac{1}{\gamma}J_{(1/\gamma)M}
  = \frac{1}{(1+\gamma)^m-1}
  \sum_{k=0}^{m-1}(1+\gamma)^{m-1-k}R^k.
\end{equation}
\end{theorem}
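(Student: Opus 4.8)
The plan is to derive all three identities from the resolvent identity for a maximally monotone operator and its inverse, together with the explicit formula \cref{e:Julianagain} for $J_{\gamma M}$. The key algebraic fact is the well-known inverse resolvent identity: for any maximally monotone $A$ and any $\gamma>0$, one has $J_{\gamma A} + \gamma J_{(1/\gamma)A^{-1}}\circ(\tfrac1\gamma\Id) = \Id$, or in the cleaner form usually stated (see, e.g., \cite[Proposition~23.20]{BC2017}), $J_{\gamma A^{-1}} = \Id - \gamma^{-1}\,\cdots$; more precisely the identity I want is $\Id = J_{\gamma A} + J_{A^{-1}}^{\text{scaled}}$. So the cleanest route: first I would record that $J_{\gamma M^{-1}} + J_{\gamma^{-1}M}\circ(\gamma\Id)\cdot\gamma^{-1}=\Id$ is not quite right; instead I will use that $M^{-1}$ is maximally monotone (it is, being the inverse of a maximally monotone operator) and invoke the standard identity $J_{\gamma A} = \Id - \gamma J_{\gamma^{-1}A^{-1}}\circ(\gamma^{-1}\Id)$. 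Setting $A = M$ and rearranging gives \cref{e:Jinv}. Since $M$ is a continuous linear operator here, $J_{\gamma^{-1}M}\circ(\gamma^{-1}\Id) = \gamma^{-1}J_{(1/\gamma)M}$ has no domain subtleties, so the rearrangement is purely formal.

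Next, \cref{e:moyo} is simply the definition of the Yosida approximation $\moyo{M}{\gamma} = \gamma^{-1}(\Id - J_{\gamma M})$ applied to $A=M$, so there is essentially nothing to prove there beyond citing the definition recalled just above the theorem statement. For \cref{e:moyoinv}, I would combine the definition $\moyo{(M^{-1})}{\gamma} = \gamma^{-1}(\Id - J_{\gamma M^{-1}})$ with \cref{e:Jinv}, which immediately yields $\moyo{(M^{-1})}{\gamma} = \gamma^{-1}J_{(1/\gamma)M}$. Then I substitute $\gamma \rightsquigarrow 1/\gamma$ into \cref{e:Julianagain}: this replaces $(1+\gamma)$ by $(1+\tfrac1\gamma) = \tfrac{1+\gamma}{\gamma}$ and $\gamma$ by $\tfrac1\gamma$, and after clearing the common powers of $\gamma$ from numerator and denominator one gets
\begin{equation*}
J_{(1/\gamma)M} = \frac{\gamma}{(1+\gamma)^m - 1}\sum_{k=0}^{m-1}(1+\gamma)^{m-1-k}R^k.
\end{equation*}
Multiplying by $\gamma^{-1}$ gives exactly the right-hand side of \cref{e:moyoinv}.

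The only mild obstacle is getting the bookkeeping of powers of $\gamma$ right in the substitution $\gamma\mapsto 1/\gamma$: in \cref{e:Julianagain} the denominator is $(1+\gamma)^m-\gamma^m$, and under the substitution this becomes $(\tfrac{1+\gamma}{\gamma})^m - (\tfrac1\gamma)^m = \gamma^{-m}\big((1+\gamma)^m - 1\big)$, while the summand $(1+\gamma)^{m-1-k}\gamma^k$ becomes $(\tfrac{1+\gamma}{\gamma})^{m-1-k}(\tfrac1\gamma)^k = \gamma^{-(m-1)}(1+\gamma)^{m-1-k}$; the net factor of $\gamma$ is $\gamma^{m}\cdot\gamma^{-(m-1)} = \gamma$, giving the stated formula. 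I would also note for completeness that the first equality in \cref{e:Jinv} and hence in \cref{e:moyoinv} could alternatively be checked directly from \cref{l:Julian} and \cref{t:Minv} without invoking the abstract inverse-resolvent identity, but the identity-based proof is shorter and I would present that one.
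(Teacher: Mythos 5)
Your proposal is correct and follows essentially the same route as the paper: it derives \cref{e:Jinv} from the inverse-resolvent identity \cite[Proposition~23.20]{BC2017} together with linearity (so the scaling $\gamma J_{(1/\gamma)M^{-1}}\circ(\gamma^{-1}\Id)$ collapses), cites the definition for \cref{e:moyo}, and obtains \cref{e:moyoinv} by combining the definition with \cref{e:Jinv} and substituting $\gamma\mapsto 1/\gamma$ into \cref{e:Julianagain}. The only difference is cosmetic: after some discarded false starts in stating the identity, you make explicit the power-of-$\gamma$ bookkeeping that the paper leaves as ``follows by employing \cref{e:Julianagain}''.
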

\begin{proof}
By the linearity of $M$ and 
\cite[Proposition~23.20]{BC2017},
we have 
\begin{equation*}
J_{\gamma M^{-1}}
= 
({1}/{\gamma})
J_{(1/\gamma)^{-1}M^{-1}}\circ (1/\gamma)^{-1}\Id
= \Id - J_{(1/\gamma)M},
\end{equation*}
and \cref{e:Jinv} holds. 
\cref{e:moyo} is just the definition, 
while \cref{e:moyoinv} is clear from \cref{e:Jinv}.
The remaining identity follows by employing \cref{e:Julianagain}.
\end{proof}

We already observed in $M^{-1}$ is $\thalb$-strongly monotone and 
that the constant $\thalb$ is sharp (unless $R=\Id$). 
This implies that the resolvent $J_{\gamma M^{-1}}$ is a 
Banach contraction --- in general, this observation is already 
in Rockafellar's seminal paper \cite{Rockprox}:

\begin{proposition}
\label{p:Banach}
$J_{\gamma M^{-1}}$ is a Banach contraction 
with Lipschitz constant $2/(2+\gamma)<1$.
If $D\neq\{0\}$, then $J_{\gamma M}$ cannot be a Banach contraction. 
\end{proposition}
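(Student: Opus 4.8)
The plan is to split the statement into its two assertions and handle each by a short computation leveraging the explicit formulas already established. For the first assertion, the starting point is the identity \cref{e:Jinv}, namely $J_{\gamma M^{-1}} = \Id - J_{(1/\gamma)M}$. First I would substitute $1/\gamma$ for $\gamma$ in the formula \cref{e:Julianagain} for $J_{\gamma M}$ and simplify; concretely, $J_{(1/\gamma)M}$ becomes $\frac{1}{(1+1/\gamma)^m - (1/\gamma)^m}\sum_{k=0}^{m-1}(1+1/\gamma)^{m-1-k}(1/\gamma)^k R^k$, which after multiplying numerator and denominator by $\gamma^m$ equals $\frac{1}{(1+\gamma)^m - 1}\sum_{k=0}^{m-1}(1+\gamma)^{m-1-k}R^k$ --- exactly the expression appearing in \cref{e:moyoinv}. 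Then $J_{\gamma M^{-1}} = \Id - J_{(1/\gamma)M}$ is an explicit polynomial in $R$ with real coefficients. Rather than estimate its norm directly from the coefficients, the cleaner route is the operator-theoretic one already signposted in \cref{r:skewsharp}: since $M^{-1} = \thalb\Id + T + N_{D^\perp}$ with $T$ skew and $N_{D^\perp}$ monotone, the operator $M^{-1}$ is $\thalb$-strongly monotone, so $\gamma M^{-1}$ is $(\gamma/2)$-strongly monotone. By the standard resolvent estimate for strongly monotone operators (e.g.\ \cite[Proposition~23.13]{BC2017}, and going back to Rockafellar \cite{Rockprox}), the resolvent of an $\varepsilon$-strongly monotone operator is Lipschitz with constant $1/(1+\varepsilon)$; applying this with $\varepsilon = \gamma/2$ yields Lipschitz constant $1/(1+\gamma/2) = 2/(2+\gamma) < 1$, as claimed.

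For the second assertion, suppose $D \neq \{0\}$ and pick $0 \neq d \in D = \Fix R$. By the remark following \cref{l:Julian}, $J_{\gamma M}$ lies in $\conv\{\Id, R, \ldots, R^{m-1}\}$ and $J_{\gamma M}|_{\ker M} = \Id|_{\Fix R}$; in particular $J_{\gamma M} d = d$. If $J_{\gamma M}$ were a Banach contraction with some constant $c < 1$, then taking $x = d$ and $y = 2d$ (or any $y \neq d$ in the line $\RR d$) would give $\|J_{\gamma M} x - J_{\gamma M} y\| = \|d - 2d\| = \|d\| = \|x - y\|$, contradicting $\|J_{\gamma M} x - J_{\gamma M} y\| \le c\|x-y\| < \|x-y\|$ since $\|d\| > 0$. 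Hence $J_{\gamma M}$ cannot be a Banach contraction.

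There is no real obstacle here: both halves are essentially one-line consequences of results proved earlier in the paper (the formula \cref{e:Jinv}, the decomposition in \cref{t:Minv}, the convex-combination remark after \cref{l:Julian}), together with the classical fact that resolvents of strongly monotone operators contract. The only point requiring a modicum of care is the bookkeeping in the algebraic simplification of $J_{(1/\gamma)M}$ --- making sure the factor $\gamma^m$ clears correctly and the denominator $(1+\gamma)^m - 1$ emerges --- but this is routine and in any case can be bypassed entirely by invoking the strong-monotonicity argument for the Lipschitz bound, so that the explicit polynomial form is not even needed for the proof.
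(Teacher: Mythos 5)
Your proposal is correct and follows essentially the same route as the paper: the Lipschitz bound $2/(2+\gamma)$ comes from the $\thalb$-strong monotonicity of $M^{-1}$ (via \cref{r:skewsharp} and the standard resolvent/cocoercivity estimate of \cite[Proposition~23.13]{BC2017}), and the second claim follows because $D=\Fix J_{\gamma M}$ contains more than one point, which is incompatible with being a Banach contraction. The explicit polynomial computation of $J_{(1/\gamma)M}$ you include at the start is, as you note yourself, not needed.
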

\begin{proof}
Using \cref{r:skewsharp} and \cite[Proposition~23.13]{BC2017}, 
we have the implications 
$M^{-1} - \thalb\Id$ is skew (and hence monotone) 
$\Leftrightarrow$
$\gamma M^{-1}-(\gamma/2)\Id$ is monotone
$\Leftrightarrow$
$\gamma M^{-1}$ is $\beta$-strongly monotone with $\beta := \gamma/2$
$\Leftrightarrow$
$J_{\gamma M^{-1}}$ is $(1+\beta)$ cocoercive 
$\Rightarrow$ 
$J_{\gamma M^{-1}}$ is a Banach contraction with constant 
$1/(1+\beta) = 2/(2+\gamma)< 1$.

Finally, if $D\neq\{0\}$, then 
$D=\ker (\gamma M) = \Fix J_{\gamma M}$ contains infinitely many points and 
thus $J_{\gamma M}$ cannot be a Banach contraction. 
\end{proof}

We will see in \cref{subsec:rot} 
below that the contraction constant cannot be improved in general.

\section{Examples}

\label{sec:examples}

In this section, we turn to concrete examples to illustrate our results. 
It is highly satisfying that complete formulas are available.
For reasons of space, we restrict our attention to $m\in\{2,3,4\}$. 
The underlying linear isometry of finite order will 
be either a rational rotator or a circular right-shift operator.


\subsection{Rational rotators}
\label{subsec:rot}

Let us start with rational rotators. 
In this subsection, we assume that 
\begin{equation*}
  X = \RR^2
  \text{~and~}
  R = \begin{pmatrix}
  \cos(2\pi/m) & - \sin(2\pi/m)  \\
  \sin(2\pi/m) & \cos(2\pi/m)  
  \end{pmatrix}.
\end{equation*}
Then 
$R^m=\Id$ and $D = \Fix R = \{(0,0)\}\subseteq X$.
Using \cref{e:Julianagain} and \cref{e:Jinv}, 
we record the following formulas for 
$J_{\gamma M}$, $J_{\gamma M^{-1}}$, 
$\moyo{M}{\gamma}$, and 
$\moyo{(M^{-1})}{\gamma}$:\\
If $m=2$, then 
\begin{equation*}
  J_{\gamma M} =
  \frac{1}{1+2\gamma}
  \begin{pmatrix}
  1 & 0 \\
  0 & 1 
  \end{pmatrix},
  \;\;
  J_{\gamma M^{-1}} =
  \frac{2}{2+\gamma}
  \begin{pmatrix}
  1 & 0 \\
  0 & 1
  \end{pmatrix},
\end{equation*}
\begin{equation*}
  \moyo{M}{\gamma} =
  \frac{2}{1+2\gamma}
  \begin{pmatrix}
  1 & 0 \\
  0 & 1 
  \end{pmatrix}, 
  \;\;\text{and}\;\;
  \moyo{(M^{-1})}{\gamma} =
  \frac{1}{2+\gamma}
  \begin{pmatrix}
  1 & 0 \\
  0 & 1
  \end{pmatrix}.
\end{equation*}
If $m=3$, then 
\begin{equation*}
  J_{\gamma M} =
  \frac{1}{2+6\gamma+6\gamma^2}
  \begin{pmatrix}
  2+3\gamma & -\sqrt{3}\gamma\\
  \sqrt{3}\gamma &  2+3\gamma
  \end{pmatrix}, 
  \;\;
  J_{\gamma M^{-1}} =
  \frac{1}{6+6\gamma+2\gamma^2}
  \begin{pmatrix}
  6 + 3\gamma& \sqrt{3}\gamma \\
  -\sqrt{3}\gamma & 6+3\gamma 
  \end{pmatrix},
\end{equation*}
\begin{equation*}
  \moyo{M}{\gamma} =
  \frac{1}{2+6\gamma+6\gamma^2}
  \begin{pmatrix}
  3+6\gamma & \sqrt{3}\\
  -\sqrt{3} &  3+6\gamma
  \end{pmatrix},
  \;\;\text{and}\;\;
  \moyo{(M^{-1})}{\gamma} =
  \frac{1}{6+6\gamma+2\gamma^2}
  \begin{pmatrix}
  3 + 2\gamma& -\sqrt{3} \\
  \sqrt{3} & 3+2\gamma 
  \end{pmatrix}.
\end{equation*}
If $m=4$, then 
\begin{equation*}
  J_{\gamma M} =
  \frac{1}{1+2\gamma+2\gamma^2}
  \begin{pmatrix}
  1+\gamma & -\gamma\\
  \gamma &  1+\gamma
  \end{pmatrix},
  \;\;
  J_{\gamma M^{-1}} =
  \frac{1}{2+2\gamma+\gamma^2}
  \begin{pmatrix}
  2+\gamma & \gamma \\
  -\gamma & 2+\gamma 
  \end{pmatrix},
\end{equation*}
\begin{equation*}
  \moyo{M}{\gamma} =
  \frac{1}{1+2\gamma+2\gamma^2}
  \begin{pmatrix}
  1+2\gamma & 1\\
  -1 &  1+2\gamma
  \end{pmatrix}, 
  \;\;\text{and}\;\;
  \moyo{(M^{-1})}{\gamma} =
  \frac{1}{2+2\gamma+\gamma^2}
  \begin{pmatrix}
  1+\gamma & -1\\
  1 & 1+\gamma 
  \end{pmatrix}.
\end{equation*}

Higher values of $m$ lead to unwieldy matrices. 
We do note that for $m=2$, we have $J_{\gamma M^{-1}} = 2/(2+\gamma)\Id$;
consequently, the constant $2/(2+\gamma)$ in \cref{p:Banach} is sharp.

\subsection{Circular shift operators}

In this subsection, we assume that 
$H$ is another real Hilbert space, 
\begin{equation*}
  X = H^m \text{~and~} 
  R \colon X \to X\colon (x_1,x_2,\ldots,x_m)\mapsto (x_m,x_1,\ldots,x_{m-1})
\end{equation*}
is the circular right-shift operator. 
Then 
\begin{equation*}
  R^m = \Id \text{~and~}
  D = \Fix R = \menge{(x,x,\ldots,x)\in X}{x\in H},
\end{equation*}
is the ``diagonal'' subspace of $X$
with orthogonal complement 
\begin{equation*}
D^\perp = \menge{(x_1,x_2,\ldots,x_m)\in X}{x_1+x_2+\cdots +x_m=0}.
\end{equation*}
Some of our results in this paper were derived in \cite{Victoria};
however, with less elegant proofs (sometimes relying on the specific form of $R$).
Using \cref{e:Julianagain} and \cref{e:Jinv} once again, 
we calculate 
$J_{\gamma M}$, $J_{\gamma M^{-1}}$, 
$\moyo{M}{\gamma}$, and 
$\moyo{(M^{-1})}{\gamma}$
for $m\in\{2,3\}$.
These formulas are all new. 
The matrices in the following are to 
be interpreted as block matrices where a numerical 
entry $\alpha$ stands for $\alpha\Id|_H$. 
If $m=2$, then 
\begin{equation*}
  J_{\gamma M} =
  \frac{1}{1+2\gamma}
  \begin{pmatrix}
  1+\gamma & \gamma \\
  \gamma & 1+\gamma 
  \end{pmatrix}
  \;\;\text{and}\;\;
  J_{\gamma M^{-1}} =
  \frac{1}{2+\gamma}
  \begin{pmatrix}
  1 & -1 \\
  -1 & 1
  \end{pmatrix};
\end{equation*}
and 
\begin{equation*}
  \moyo{M}{\gamma} =
  \frac{1}{1+2\gamma}
  \begin{pmatrix}
  1 & -1 \\
  -1 & 1 
  \end{pmatrix}
  \;\;\text{and}\;\;
  \moyo{(M^{-1})}{\gamma} =
  \frac{1}{(2+\gamma)\gamma}
  \begin{pmatrix}
  1+\gamma & 1 \\
  1 & 1+\gamma
  \end{pmatrix}.
\end{equation*}
If $m=3$, then 
\begin{equation*}
  J_{\gamma M} =
  \frac{1}{1+3\gamma+3\gamma^2}
  \begin{pmatrix}
  (1+\gamma)^2 & \gamma^2 & (1+\gamma)\gamma\\
  (1+\gamma)\gamma & (1+\gamma)^2 & \gamma^2\\
  \gamma^2 & (1+\gamma)\gamma & (1+\gamma)^2
  \end{pmatrix};
\end{equation*}
\begin{equation*}
  J_{\gamma M^{-1}} =
  \frac{1}{3+3\gamma+\gamma^2}
  \begin{pmatrix}
  2+\gamma & -1 & -(1+\gamma)\\
  -(1+\gamma) & 2+\gamma & -1\\
  -1 & -(1+\gamma) & 2+\gamma
  \end{pmatrix};
\end{equation*}
\begin{equation*}
  \moyo{M}{\gamma} =
  \frac{1}{1+3\gamma+3\gamma^2}
  \begin{pmatrix}
  1+2\gamma & -\gamma & -1-\gamma\\
  -1-\gamma & 1+2\gamma & -\gamma\\
  -\gamma & -1-\gamma & 1+2\gamma
  \end{pmatrix};
\end{equation*}
\begin{equation*}
  \moyo{(M^{-1})}{\gamma} =
  \frac{1}{(3+3\gamma+\gamma^2)\gamma}
  \begin{pmatrix}
  (1+\gamma)^2 & 1 & 1+\gamma\\
  1+\gamma & (1+\gamma)^2 & 1\\
  1 & 1+\gamma & (1+\gamma)^2
  \end{pmatrix}.
\end{equation*}
We refrain from listing formulas for $m\geq 4$ due to their complexity. 

\section{Concluding remarks}

\label{sec:conc}

Monotone operator theory is a fascinating and useful area 
of set-valued and variational analysis. 
In this paper, we carefully analyzed the displacement mapping
of an isometry of finite order. 
We provided new and explicit formulas for inverses 
(both in the set-valued and Moore-Penrose sense) as well as for 
resolvents and Yosida approximants. 
This is a valuable contribution because explicit formulas 
are rather uncommon in this area. 
We believe these formulas will turn out to be useful 
not only for discovering new results but also for providing examples or counterexamples. 
(See also \cite{settled} and \cite{ABW21} for very recent applications 
of our work.)

\section*{Acknowledgments}
HHB and XW are supported by the Natural Sciences and
Engineering Research Council of Canada.

\end{document}